\documentclass[a4paper,12pt,reqno]{amsart}
\usepackage[utf8]{inputenc}
\usepackage{amsmath}
\usepackage{amsthm,amssymb,amsfonts}
\usepackage{graphicx}
\usepackage{hyperref}
\usepackage{url}

\usepackage{geometry}
\geometry{a4paper,top=3cm,bottom=3cm,left=3cm,right=3cm,%
	heightrounded,bindingoffset=0mm}

\usepackage{array,bm}
\usepackage{color}
\usepackage{verbatim} 

\newtheorem{theorem}{Theorem}

\newtheorem{proposition}[theorem]{Proposition}
\newtheorem{remark}[theorem]{Remark}

\newtheorem{lemma}[theorem]{Lemma}

\renewcommand\Im{\operatorname{Im}}
\renewcommand\Re{\operatorname{Re}}

\makeatletter
\@namedef{subjclassname@2020}{\textup{2020} Mathematics Subject Classification}
\makeatother

\begin{document}

\title[On a pointwise inequality for even Legendre polynomials]{On a pointwise inequality for even Legendre polynomials in high dimensional spheres}

\dedicatory{Dedicated to Tohru Ozawa sensei, with admiration and gratitude}

\author{Shirong Chen}
\address{School of Mathematics and Computer Sciences, Gannan Normal University, Ganzhou 341000, People's Republic of China}
\email{somxiaorong@163.com}

\author{Yi C. Huang} 
\address{Yunnan Key Laboratory of Modern Analytical Mathematics and Applications, Yunnan Normal University, Kunming 650500, People's Republic of China}
\address{School of Mathematical Sciences, Nanjing Normal University, Nanjing 210023, People's Republic of China}
\email{Yi.Huang.Analysis@gmail.com}
\urladdr{https://orcid.org/0000-0002-1297-7674}

\author{Jian-Yang Zhang}
\address{School of Mathematical Sciences, Nanjing Normal University, Nanjing 210023, People's Republic of China}
\email{3203927393@qq.com}

\date{\today}
\subjclass[2020]{Primary 33C55. Secondary 30C10.}  
\keywords{Spherical harmonics, Legendre polynomials, spectral gaps, Boltzmann operator, Poincar\'e inequalities, trigonometric functions}
\thanks{Research of YCH is partially supported by the National NSF grant of China (no. 11801274),
the JSPS Invitational Fellowship for Research in Japan (no. S24040),
and the Open Project from Yunnan Normal University (no. YNNUMA2403).
YCH thanks in particular Mikhail Tyaglov \& Yuzhe Zhu for helpful communications on Orthogonal Polynomials \& Kinetic Equations.}

\maketitle

\begin{abstract}
	We present a pointwise inequality for adjacent even Legendre polynomials in high dimensional spheres featuring the effect of spectral gaps.
	This improves a recent result of Imbert, Silvestre and Villani that is crucially used in their study of the Fisher information for the Boltzmann equation.
\end{abstract}

\section{Introduction}

Recently, in proving the monotonicity of the Fisher information for the Boltzmann equation,
Imbert, Silvestre and Villani established certain \textit{integro-differential} log-Sobolev inequalities and Poincar\'e inequalities for the Boltzmann operator
$$\mathcal{B}_b(f)(\sigma)=\int_{\mathbb{S}^{d-1}}(f(\sigma')-f(\sigma))b(\sigma'\cdot\sigma)d\sigma'.$$
In particular, the study of their Poincar\'e inequalities is reduced to the following interesting pointwise inequality for even Legendre polynomials, see \cite[Prop. 7.6]{Villani24}. 
\begin{proposition}[Imbert, Silvestre and Villani, 2024] \label{prop:Imb}
	For the Legendre polynomials of order $\ell$ normalized so that $P_\ell(1)=1$, we have for all $x\in[-1,1]$ and $\ell\ge1$,
	\begin{equation}\label{Legendre estimate}
		1-P_{2\ell}(x)\le\frac{\lambda_{2\ell}}{\lambda_2}(1-P_2(x)).
	\end{equation}
	Here $\lambda_\ell=\ell(\ell+d-2)$ is the eigenvalue of the (minus) spherical Laplacian $-\Delta_{\mathbb{S}^{d-1}}$ 
	corresponding to spherical harmonics of order $\ell$, and the unique axially symmetric spherical harmonic $Y_\ell$ of order $\ell$
	such that $Y_\ell(e_1)=1$ is given by $Y_\ell(\sigma):=P_\ell(e_1\cdot\sigma)$.
\end{proposition}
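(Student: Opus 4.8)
The plan is to reduce \eqref{Legendre estimate} to a classical extremal property of Jacobi polynomials by two applications of the fundamental theorem of calculus. Since $P_{2\ell}$ and $P_2$ are even polynomials, both sides of \eqref{Legendre estimate} are even, so it suffices to treat $x\in[0,1]$. From $P_2(x)=\frac{dx^2-1}{d-1}$ and $\lambda_2=2d$ one gets $\frac{\lambda_{2\ell}}{\lambda_2}(1-P_2(x))=\frac{\lambda_{2\ell}}{2(d-1)}(1-x^2)$, so with
\[
\Phi(x):=\frac{\lambda_{2\ell}}{2(d-1)}(1-x^2)-(1-P_{2\ell}(x))
\]
we have $\Phi(1)=0$ and $\Phi(x)=\int_x^1(-\Phi'(u))\,du$ on $[0,1]$. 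Hence it is enough to show $\Phi'\le 0$ on $[0,1]$, that is $P_{2\ell}'(x)\le \frac{\lambda_{2\ell}}{d-1}x$. Using the classical value $P_\ell'(1)=\frac{\lambda_\ell}{d-1}$, this is the self-normalized inequality
\[
\frac{P_{2\ell}'(x)}{P_{2\ell}'(1)}\le x,\qquad x\in[0,1]
\]
(at $x=0$ both sides vanish, as $P_{2\ell}'$ is odd).

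Next I would pass from Gegenbauer to Jacobi polynomials. Write $P_\ell=C_\ell^{(\nu)}/C_\ell^{(\nu)}(1)$ with $\nu=\frac{d-2}{2}$, and assume $d\ge 3$ so that $\nu>0$ (the case $d=2$ is handled below). Combining $\frac{d}{dx}C_n^{(\nu)}=2\nu\,C_{n-1}^{(\nu+1)}$ with the quadratic transformation $C_{2m+1}^{(\mu)}(x)=c_{m,\mu}\,x\,P_m^{(\mu-\frac12,\frac12)}(2x^2-1)$, $c_{m,\mu}>0$ for $\mu>0$ (Szeg\H{o}, \S4.7), applied with $\mu=\nu+1=\frac d2$ and $m=\ell-1$, one obtains a positive constant $\kappa$ with $P_{2\ell}'(x)=\kappa\,x\,P_{\ell-1}^{(\frac{d-1}{2},\frac12)}(2x^2-1)$. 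Evaluating also at $x=1$, where $P_{\ell-1}^{(\frac{d-1}{2},\frac12)}(1)>0$, and dividing,
\[
\frac{P_{2\ell}'(x)}{P_{2\ell}'(1)}=x\cdot\frac{P_{\ell-1}^{(\frac{d-1}{2},\frac12)}(2x^2-1)}{P_{\ell-1}^{(\frac{d-1}{2},\frac12)}(1)}.
\]
As $x$ ranges over $[0,1]$ the variable $s=2x^2-1$ ranges over $[-1,1]$, so the inequality of the previous paragraph is equivalent to $P_{\ell-1}^{(\frac{d-1}{2},\frac12)}(s)\le P_{\ell-1}^{(\frac{d-1}{2},\frac12)}(1)$ for all $s\in[-1,1]$.

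This last bound is exactly Szeg\H{o}'s theorem on the location of the maximum of Jacobi polynomials: if $\alpha\ge\beta$ and $\alpha\ge-\frac12$, then $\max_{s\in[-1,1]}|P_m^{(\alpha,\beta)}(s)|=P_m^{(\alpha,\beta)}(1)$, attained at $s=1$. Here $\alpha=\frac{d-1}{2}\ge\frac12=\beta$ and $\alpha\ge\frac12>-\frac12$ whenever $d\ge 2$, so the theorem applies and the proof is complete. The remaining case $d=2$ is elementary: there $P_{2\ell}(\cos\theta)=\cos(2\ell\theta)$ and $\lambda_{2\ell}=4\ell^2$, so \eqref{Legendre estimate} reduces to $|\sin(\ell\theta)|\le\ell\,|\sin\theta|$, which follows by induction on $\ell$ from $\sin((\ell+1)\theta)=\sin(\ell\theta)\cos\theta+\cos(\ell\theta)\sin\theta$.

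I expect the real work to be the normalization bookkeeping in the middle step: the differentiation plus quadratic transformation must be carried out so that the Jacobi parameter pair comes out as $(\frac{d-1}{2},\frac12)$ and not, say, $(\frac{d-3}{2},-\frac12)$, since it is precisely the relation $\alpha\ge\beta$ with $\beta=\frac12$ that places Szeg\H{o}'s extremum at the endpoint $s=1$ that we need; the value $\beta=-\frac12$ would instead push the maximum to the useless endpoint. The first reduction is a routine telescoping through $x=1$, and the final step is a citation; if a self-contained treatment is preferred, the needed special case $P_m^{(\alpha,1/2)}(s)\le P_m^{(\alpha,1/2)}(1)$ can be obtained directly from a Mehler--Dirichlet-type integral representation of $P_m^{(\alpha,\beta)}$, but invoking the classical extremal theorem is the cleanest route.
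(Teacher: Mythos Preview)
Your proof is correct and takes a genuinely different route from the paper.

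The paper does not prove Proposition~\ref{prop:Imb} directly; instead it establishes the sharper spectral-gap inequality (Proposition~\ref{p:LegPolGap}) via the Dirichlet--Mehler integral representation of $P_\ell$ (Lemma~\ref{integral formula for LP}), reducing the pointwise bound on $P_{2\ell}-P_{2(\ell+1)}$ to the elementary trigonometric inequality $\sin^2((\ell+1)\theta)-\sin^2(\ell\theta)\le(2\ell+1)\sin^2\theta$, and then integrating in the auxiliary variable~$s$. Proposition~\ref{prop:Imb} follows by telescoping. Your argument instead differentiates once to reduce \eqref{Legendre estimate} to $P_{2\ell}'(x)\le P_{2\ell}'(1)\,x$ on $[0,1]$, and then uses the Gegenbauer derivative formula together with the quadratic transformation to identify $P_{2\ell}'(x)/x$ with a Jacobi polynomial $P_{\ell-1}^{(\frac{d-1}{2},\frac12)}(2x^2-1)$, so that the desired bound becomes precisely Szeg\H{o}'s extremal theorem $\max_{[-1,1]}|P_m^{(\alpha,\beta)}|=P_m^{(\alpha,\beta)}(1)$ for $\alpha\ge\beta\ge-\tfrac12$. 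The paper's approach is more self-contained and, crucially, yields the stronger difference inequality \eqref{improved Legendre estimate}; your approach is shorter for Proposition~\ref{prop:Imb} itself, connects the estimate to classical extremal theory of orthogonal polynomials, and makes transparent why the value $P_{2\ell}'(1)=\lambda_{2\ell}/(d-1)$ governs the constant. It does not, however, immediately deliver the adjacent-pair bound \eqref{improved Legendre estimate}, which would require a more refined extremal statement for $P_{2(\ell+1)}'-P_{2\ell}'$.
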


More precisely, \eqref{Legendre estimate} is used in \cite{Villani24} to prove the following eigenvalue comparison
	\begin{equation}\label{eigencomp}
\frac{\widetilde{\lambda}_{2\ell}}{{\lambda}_{2\ell}}\leq \frac{\widetilde{\lambda}_{2}}{{\lambda}_{2}},\quad\ell\geq1,
	\end{equation}
where $\widetilde\lambda_\ell$ is the eigenvalue of the (minus) Boltzmann operator $-\mathcal{B}_b$ corresponding to spherical harmonics of order $\ell$.
From \eqref{eigencomp} one obtains immediately the Poincar\'e inequalities: for some dimensional and $b$-dependent constant $C_P(d,b)$, we have
$$(-\Delta_{\mathbb{S}^{d-1}})\ge C_P(d,b)(-\mathcal{B}_b)$$ 
in the sense of quadratic form on functions which are even on the sphere in the sense $f(\sigma)=f(-\sigma)$. Moreover, $C_P(d,b)$ can be explicitly evaluated: for any $e\in\mathbb{S}^{d-1}$
$$C_P(d,b)=\frac12\frac{{\lambda}_{2}}{\widetilde{\lambda}_{2}}=\frac{d-1}{\int_{\mathbb{S}^{d-1}}(1-(e\cdot\sigma)^2)b(e\cdot\sigma)d\sigma}.$$

Here we present an improvement of Proposition \ref{prop:Imb} featuring the spectral gaps .

\begin{proposition} \label{p:LegPolGap}
	For all $x\in [-1,1]$ and $\ell\ge1$,
	\begin{equation}\label{improved Legendre estimate}
		P_{2\ell}(x)-P_{2(\ell+1)}(x)\le\frac{\lambda_{2(\ell+1)}-\lambda_{2\ell}}{\lambda_2}(1-P_2(x)).
	\end{equation}
\end{proposition}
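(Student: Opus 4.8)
The plan is to reduce \eqref{improved Legendre estimate} to one classical pointwise fact, $P_n'(x)\le P_n'(1)$ on $[-1,1]$. First I would rewrite both sides explicitly. With $\lambda_\ell=\ell(\ell+d-2)$ one computes $\lambda_{2(\ell+1)}-\lambda_{2\ell}=2(4\ell+d)$ and $\lambda_2=2d$, while the $\mathbb{S}^{d-1}$-normalized Legendre polynomial of order $2$ is $P_2(x)=(dx^2-1)/(d-1)$, so that $1-P_2(x)=\frac{d}{d-1}(1-x^2)$. Hence the right-hand side of \eqref{improved Legendre estimate} equals $\frac{4\ell+d}{d-1}(1-x^2)$. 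Evaluating the ultraspherical differential equation $(1-x^2)P_n''-(d-1)xP_n'+\lambda_nP_n=0$ at $x=1$ also records $P_n'(1)=\lambda_n/(d-1)$. Thus \eqref{improved Legendre estimate} is equivalent to
\begin{equation}\label{p:reduced1}
	P_{2\ell}(x)-P_{2(\ell+1)}(x)\le\frac{4\ell+d}{d-1}\,(1-x^2),\qquad x\in[-1,1].
\end{equation}

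Next I would invoke the classical identity, a consequence of the three-term recurrence together with the differentiation formula for ultraspherical polynomials,
\begin{equation}\label{p:diffident}
	P_{n-1}(x)-P_{n+1}(x)=\frac{2n+d-2}{\lambda_n}\,(1-x^2)\,P_n'(x).
\end{equation}
Taking $n=2\ell+1$, so that $2n+d-2=4\ell+d$, rewrites the left-hand side of \eqref{p:reduced1} as $\frac{4\ell+d}{\lambda_{2\ell+1}}(1-x^2)P_{2\ell+1}'(x)$. Since $1-x^2\ge0$ on $[-1,1]$ and $P_{2\ell+1}'(1)=\lambda_{2\ell+1}/(d-1)$, the proposition collapses to the pointwise derivative bound
\begin{equation}\label{p:derbd}
	P_n'(x)\le P_n'(1),\qquad x\in[-1,1],
\end{equation}
applied with $n=2\ell+1$ (indeed \eqref{p:derbd} holds for every $n\ge1$).

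To prove \eqref{p:derbd} I would use the standard trigonometric argument. Since $d\ge2$, the ultraspherical parameter $(d-2)/2$ is nonnegative, so $P_n(\cos\theta)=\sum_m b_{n,m}\cos(m\theta)$ with all $b_{n,m}\ge0$, the sum running over $m\in\{n,n-2,n-4,\dots\}\cap\mathbb{Z}_{\ge 0}$, and $\sum_m b_{n,m}=P_n(1)=1$. Differentiating in $\theta$ and dividing by $\sin\theta$ gives, for $\theta\in(0,\pi)$,
$$P_n'(\cos\theta)=\sum_m m\,b_{n,m}\,\frac{\sin(m\theta)}{\sin\theta}\le\sum_m m^2\,b_{n,m}=P_n'(1),$$
where one uses $\sin(m\theta)\le|\sin(m\theta)|\le m\sin\theta$ (an elementary induction on $m$) together with $P_n'(1)=\lim_{\theta\to0^+}P_n'(\cos\theta)=\sum_m m^2 b_{n,m}$; the endpoints $x=\pm1$ then follow by continuity. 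When $d=2$ the expansion reduces to $P_n(\cos\theta)=\cos(n\theta)$ and the same computation applies.

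No step here is technically deep: the content lies in the reduction itself. Identity \eqref{p:diffident} is what cleanly extracts the factor $1-x^2$, and the point that makes \eqref{improved Legendre estimate} sharp — rather than merely true — is that the spectral-gap ratio $(\lambda_{2(\ell+1)}-\lambda_{2\ell})/\lambda_2$ is exactly calibrated so that $P_{2\ell+1}'(x)$ gets absorbed by its maximum $P_{2\ell+1}'(1)$; thus the only real care needed is in checking that these constants line up. I would also remark that \eqref{improved Legendre estimate} is equivalent to the monotonicity in $\ell$ of $F_\ell(x):=P_{2\ell}(x)-1+\frac{\lambda_{2\ell}}{\lambda_2}(1-P_2(x))$, and since $F_1\equiv0$, summing over $\ell$ recovers Proposition~\ref{prop:Imb}.
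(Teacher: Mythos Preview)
Your proof is correct and follows a genuinely different route from the paper's.

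The paper works with the Laplace-type integral representation $P_\ell(x)=\frac{\omega_{d-3}}{\omega_{d-2}}\int_{-1}^1(x+is\sqrt{1-x^2})^\ell(1-s^2)^{(d-4)/2}\,ds$, bounds the real part of the difference $(x+is\sqrt{1-x^2})^{2\ell}-(x+is\sqrt{1-x^2})^{2(\ell+1)}$ via a technical lemma on imaginary parts (itself reduced to the $d=2$ case $\sin^2((\ell+1)\theta)-\sin^2(\ell\theta)\le(2\ell+1)\sin^2\theta$), and then integrates in $s$ to recover $\frac{4\ell+d}{d-1}(1-x^2)$. Your argument instead factors the left-hand side exactly through the recurrence identity \eqref{p:diffident}, reducing everything to the single scalar bound $P_n'(x)\le P_n'(1)$, which you then dispatch via the nonnegative cosine expansion of $P_n(\cos\theta)$. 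Both proofs ultimately bottom out in the same elementary fact $|\sin(m\theta)|\le m|\sin\theta|$, but yours isolates it more cleanly: once \eqref{p:diffident} is on the table, the proposition is literally equivalent to $P_{2\ell+1}'(x)\le P_{2\ell+1}'(1)$, which explains transparently why the constant $(\lambda_{2(\ell+1)}-\lambda_{2\ell})/\lambda_2$ is sharp. The paper's approach, by contrast, trades the need to quote \eqref{p:diffident} and the Gegenbauer--Fej\'er positivity of the cosine coefficients for the integral representation, and carries the dimension $d$ through an explicit $s$-integration rather than through known special-function identities.
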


\begin{remark}
Apparently, a telescoping summation of \eqref{improved Legendre estimate} leads to \eqref{Legendre estimate}.
\end{remark}

\begin{remark}
It would be interesting to find a purely spectral proof of \eqref{Legendre estimate} or \eqref{improved Legendre estimate}.
\end{remark}

\section{Proof of Proposition \ref{p:LegPolGap}}

Our proof is based on the following crucial observation. 
When $d=2$, $P_\ell(x)$ corresponds to $\cos(\ell\Theta)$ if $x=\cos\Theta$ with $\Theta\in[0,\pi]$,
so in this case \eqref{improved Legendre estimate} becomes 
\begin{equation}\label{unit case}
\cos(2\ell\Theta)-\cos(2(\ell+1)\Theta)\le(2\ell+1)(1-\cos(2\Theta)).
	\end{equation}
By the \underline{sum-to-product formula} for trigonometric functions, \eqref{unit case} is equivalent to
\[\sin((2\ell+1)\Theta)\le(2\ell+1)\sin\Theta.\]
Therefore, it is enough to prove the following elementary estimate
\[|\sin((2\ell+1)\theta)|\le(2\ell+1)|\sin\theta|,\quad \theta\in[-\pi,\pi],\]
which can be verified simply by induction on $\ell$, see for example \cite{Villani24}.

For higher dimensions, we need the following basic tool, see \cite[Thm. 4.26]{Efthimoubook2014}.
\begin{lemma}[Integral representation formula]\label{integral formula for LP}
	Assume $d\ge3$. We have
	\[P_\ell(x)=\frac{\omega_{d-3}}{\omega_{d-2}}\int_{-1}^1\left(x+is\sqrt{1-x^2}\right)^{\ell}(1-s^2)^{\frac{d-4}{2}}\,ds.\]
	Here, for $p\geq0$, the dimensional constant $\omega_p$ is $\frac{2\pi^{\frac{p+1}{2}}}{\Gamma(\frac{p+1}{2})}$.
\end{lemma}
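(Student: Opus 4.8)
The plan is to realize $P_\ell$ as the restriction to the sphere of an explicitly constructed \emph{zonal harmonic}, built by averaging complex ``isotropic'' plane waves. Write $y=(y_1,y')\in\mathbb{R}\times\mathbb{R}^{d-1}$ and, for $\xi\in\mathbb{S}^{d-2}\subset\mathbb{R}^{d-1}$, set $a_\xi=(1,i\xi)\in\mathbb{C}^d$. Since $a_\xi\cdot a_\xi=1-|\xi|^2=0$, a direct computation gives $\Delta\big((a_\xi\cdot y)^\ell\big)=\ell(\ell-1)(a_\xi\cdot a_\xi)(a_\xi\cdot y)^{\ell-2}=0$, so each $(a_\xi\cdot y)^\ell=(y_1+i\,\xi\cdot y')^\ell$ is a harmonic polynomial, homogeneous of degree $\ell$. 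First I would define
\[
H_\ell(y):=\int_{\mathbb{S}^{d-2}}(y_1+i\,\xi\cdot y')^\ell\,d\xi,
\]
which inherits harmonicity and homogeneity of degree $\ell$, and which is manifestly invariant under rotations of the $y'$-variable, hence axially symmetric about $e_1$.

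Next I would compute $H_\ell$ on the unit sphere. By axial symmetry I may take $y=\sigma$ with $\sigma_1=x$ and $\sigma'=\sqrt{1-x^2}\,e$ for a fixed unit vector $e\in\mathbb{R}^{d-1}$, so that $\xi\cdot\sigma'=\sqrt{1-x^2}\,\xi_1$. Disintegrating the surface measure on $\mathbb{S}^{d-2}$ along the first coordinate $\xi_1=s$ — whose slices are copies of $\mathbb{S}^{d-3}$ of radius $\sqrt{1-s^2}$ — yields the co-area factor $(1-s^2)^{(d-4)/2}$ and the total mass $\omega_{d-3}$ of the slice spheres, giving
\[
H_\ell\big|_{|y|=1}=\omega_{d-3}\int_{-1}^1\big(x+is\sqrt{1-x^2}\big)^\ell(1-s^2)^{\frac{d-4}{2}}\,ds.
\]
Expanding the integrand by the binomial theorem and using that the even weight kills all odd powers of $s$, only even powers $(is)^{2m}=(-1)^m s^{2m}$ survive; since $(1-x^2)^m$ is a polynomial, this confirms that the right-hand side is a genuine real polynomial in $x$ of degree $\ell$, and in particular is not identically zero.

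It then remains to identify this polynomial and fix the constant. Since $H_\ell$ is a harmonic homogeneous polynomial of degree $\ell$, its restriction to $\mathbb{S}^{d-1}$ is a spherical harmonic of order $\ell$; being axially symmetric about $e_1$, it must, by the uniqueness of the normalized axially symmetric spherical harmonic recalled in Proposition~\ref{prop:Imb}, be a constant multiple of $\sigma\mapsto P_\ell(e_1\cdot\sigma)$. Thus $H_\ell\big|_{|y|=1}=c_\ell\,P_\ell(x)$ for some constant $c_\ell$. To evaluate $c_\ell$ I would set $x=1$: the integrand collapses to $(1-s^2)^{(d-4)/2}$, and the Beta-integral $\int_{-1}^1(1-s^2)^{(d-4)/2}\,ds=\frac{\Gamma(1/2)\,\Gamma((d-2)/2)}{\Gamma((d-1)/2)}$ is precisely $\omega_{d-2}/\omega_{d-3}$ after unwinding the definition of $\omega_p$. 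Since $P_\ell(1)=1$, this forces $c_\ell=\omega_{d-2}$ for every $\ell$, and dividing through yields exactly the claimed formula.

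The conceptual heart of the argument is the averaging construction: recognizing that isotropic exponentials are harmonic and that their rotational average over $\mathbb{S}^{d-2}$ is forced to be the zonal harmonic. The step demanding the most care is the measure disintegration together with the bookkeeping of the dimensional constants $\omega_{d-3}$ and $\omega_{d-2}$; the hypothesis $d\ge3$ is exactly what makes the weight $(1-s^2)^{(d-4)/2}$ integrable and the spheres $\mathbb{S}^{d-2},\mathbb{S}^{d-3}$ meaningful, and the Gamma-function identity is what aligns the normalization so that the single constant $\omega_{d-2}$ emerges uniformly in $\ell$.
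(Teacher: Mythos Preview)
Your argument is correct and self-contained. Note, however, that the paper does not actually prove this lemma: it is quoted as a known tool with a reference to \cite[Thm.~4.26]{Efthimoubook2014}, so there is no ``paper's own proof'' to compare against. What you have written is essentially the classical derivation that one finds in that reference---averaging the isotropic harmonics $(a_\xi\cdot y)^\ell$ over $\xi\in\mathbb{S}^{d-2}$ to produce the zonal harmonic, then performing the coarea disintegration $d\xi=(1-s^2)^{(d-4)/2}\,ds\,d\omega$ and fixing the constant by evaluation at $x=1$ together with the Beta integral.

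One minor logical tidy-up: your intermediate remark that the binomial expansion shows the integral is ``not identically zero'' is unnecessary and slightly out of order. The one-dimensionality of the space of axially symmetric spherical harmonics of order $\ell$ already gives $H_\ell|_{\mathbb{S}^{d-1}}=c_\ell P_\ell$ for \emph{some} constant $c_\ell$ (possibly zero a priori); your evaluation at $x=1$ then yields $c_\ell=\omega_{d-2}\neq0$ directly, which settles nonvanishing a posteriori. Everything else---the harmonicity of the plane waves, the measure disintegration, the Gamma identity $\int_{-1}^1(1-s^2)^{(d-4)/2}\,ds=\omega_{d-2}/\omega_{d-3}$, and the role of the hypothesis $d\ge3$---is handled correctly.
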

Note that by the \underline{double-angle formula}, \eqref{unit case} is also equivalent to
\begin{equation}\label{unit case'}
\sin^2((\ell+1)\theta)-\sin^2(\ell\theta)\le(2\ell+1)\sin^2\theta.
	\end{equation}
	We also need the following technical result which is somehow inspired by \eqref{unit case'}.
\begin{lemma}\label{genlized version of unit case}
	The following inequality holds for all $\ell\ge1$ and $x,s\in[-1,1]$,
	\[\left(\Im \left(x+is\sqrt{1-x^2}\right)^{\ell+1}\right)^2-\left(\Im \left(x+is\sqrt{1-x^2}\right)^{\ell}\right)^2\le(2\ell+1)s^2(1-x^2).\]
\end{lemma}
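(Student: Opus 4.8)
The plan is to reduce the higher-dimensional estimate to the one-dimensional trigonometric inequality already used for $d=2$, by exploiting that every complex number $z := x + is\sqrt{1-x^2}$ entering the integral representation of Lemma~\ref{integral formula for LP} lies in the closed unit disk. First I would note that
\[ |z|^2 = x^2 + s^2(1-x^2) = x^2(1-s^2) + s^2 \le 1, \]
since $x^2, s^2 \in [0,1]$ and $1 - s^2 \ge 0$; thus $r := |z| \in [0,1]$. Writing $z = r e^{i\phi}$ with $\phi \in \mathbb{R}$ (if $r = 0$ both sides of the asserted inequality vanish), one has $\Im(z^k) = r^k \sin(k\phi)$ and $(\Im z)^2 = r^2\sin^2\phi = s^2(1-x^2)$, so the statement of the lemma is equivalent to
\[ r^{2\ell+2}\sin^2\bigl((\ell+1)\phi\bigr) - r^{2\ell}\sin^2(\ell\phi) \le (2\ell+1)\, r^2\sin^2\phi. \]

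The key step is a one-line monotonicity trick in $r$: because $0 \le r \le 1$ we have $r^{2\ell}\sin^2(\ell\phi) \ge r^{2\ell+2}\sin^2(\ell\phi)$, so the left-hand side above is bounded by
\[ r^{2\ell+2}\bigl(\sin^2((\ell+1)\phi) - \sin^2(\ell\phi)\bigr) = r^{2\ell+2}\sin\bigl((2\ell+1)\phi\bigr)\sin\phi, \]
where the identity is the product formula $\sin^2 P - \sin^2 Q = \sin(P+Q)\sin(P-Q)$ with $P = (\ell+1)\phi$ and $Q = \ell\phi$ — precisely the manipulation that links \eqref{unit case'} with \eqref{unit case}. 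Finally I would pass to absolute values and invoke the elementary estimate $|\sin(n\theta)| \le n|\sin\theta|$, valid for every real $\theta$ (here with $n = 2\ell+1$; this is the same inequality already cited for the $d=2$ case), together with $r^{2\ell+2} \le r^2$, to bound the last expression by $r^2(2\ell+1)\sin^2\phi = (2\ell+1)s^2(1-x^2)$, completing the proof.

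I do not anticipate a serious obstacle: the whole point is the observation $|z| \le 1$, after which the higher-dimensional integrand is simply a contraction — by the factor $r^{2\ell+2} \le 1$ — of the $d=2$ expression, and the sign ambiguity created by rewriting $\sin^2((\ell+1)\phi) - \sin^2(\ell\phi)$ as $\sin((2\ell+1)\phi)\sin\phi$ is harmless because the target right-hand side is nonnegative. If one prefers to stay in Cartesian coordinates, the same argument runs through the identity $\Im(w)^2 = \tfrac12\bigl(|w|^2 - \Re(w^2)\bigr)$, which gives
\[ \Im(z^{\ell+1})^2 - \Im(z^\ell)^2 = \Im(z^{2\ell+1})\,\Im(z) + \tfrac12(|z|^2 - 1)\bigl(|z|^{2\ell} - \Re(z^{2\ell})\bigr), \]
whose correction term is $\le 0$ since $|z| \le 1$ and $|z|^{2\ell} = |z^{2\ell}| \ge \Re(z^{2\ell})$; one then bounds $\Im(z^{2\ell+1})\,\Im(z) \le |\Im(z^{2\ell+1})|\,|\Im z| \le (2\ell+1)(\Im z)^2$ in the same way. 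The polar route is, however, the cleaner of the two.
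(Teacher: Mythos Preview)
Your argument is correct and follows essentially the same route as the paper's proof: pass to polar form $z=re^{i\phi}$ with $r=|z|\le1$, use the monotonicity in $r$ to reduce to the unit-circle expression $\sin^2((\ell+1)\phi)-\sin^2(\ell\phi)$, and then invoke the elementary bound $|\sin(n\theta)|\le n|\sin\theta|$. The only cosmetic difference is that the paper writes $r=|z|^{-1}\ge1$ and multiplies up rather than using $r=|z|\le1$ and factoring out $r^{2\ell+2}$, but the substance is identical.
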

\begin{proof}
	Fix $x,s\in[-1,1]$. Note that $$\left|x+is\sqrt{1-x^2}\right|^2=x^2+s^2(1-x^2)\le1.$$
	Let $r= \left|x+is\sqrt{1-x^2}\right|^{-1}\geq1$. We set 
	$$\sin\theta=rs\sqrt{1-x^2},\quad \theta\in[-\pi,\pi]$$ and 
$$A=r^{2\ell}\left(\left(\Im \left(x+is\sqrt{1-x^2}\right)^{\ell+1}\right)^2-\left(\Im \left(x+is\sqrt{1-x^2}\right)^{\ell}\right)^2\right).$$
	Since $r\ge1$, we have
	\begin{equation}\label{trans to unit}
			A\le r^{2(\ell+1)}\left(\Im \left(x+is\sqrt{1-x^2}\right)^{\ell+1}\right)^2-r^{2\ell}\left(\Im \left(x+is\sqrt{1-x^2}\right)^{\ell}\right)^2.
	\end{equation}
	By the construction of $r$ and $\theta$, the RHS of $\eqref{trans to unit}$ actually equals to $\sin^2((\ell+1)\theta)-\sin^2(\ell\theta)$ which is less than $(2\ell+1)\sin^2\theta=(2\ell+1)s^2r^2(1-x^2)$ by \eqref{unit case'}.
Thus we get
	$$\begin{aligned}
	&r^{2(\ell-1)}\left(\left(\Im \left(x+is\sqrt{1-x^2}\right)^{\ell+1}\right)^2-\left(\Im \left(x+is\sqrt{1-x^2}\right)^{\ell}\right)^2\right)\\
	&\qquad\le (2\ell+1)s^2(1-x^2).\end{aligned}$$
	Since $r\ge1$ and $\ell\geq1$, we can drop above factor $r^{2(\ell-1)}$ and finish the proof.
\end{proof}
 We now continue the proof of Proposition \ref{improved Legendre estimate}.
	By Lemma \ref{integral formula for LP}, we need estimate
	\[\frac{\omega_{d-3}}{\omega_{d-2}}\int_{-1}^1\left(\left(x+is\sqrt{1-x^2}\right)^{2\ell}-\left(x+is\sqrt{1-x^2}\right)^{2(\ell+1)}\right)(1-s^2)^{\frac{d-4}{2}}\,ds.\]
	It is easy to see that this integral is actually a real number, so we only need estimate the real part of the difference-form integrand. 
	For any complex number $z=a+ib$, we have $\Re  z^2=|z|^2-2b^2$. 
	Applying this fact to 
	$$F(s,x)=\Re  \left(\left(x+is\sqrt{1-x^2}\right)^{2\ell}-\left(x+is\sqrt{1-x^2}\right)^{2(\ell+1)}\right)$$
    and using Lemma \ref{genlized version of unit case}, we get
    \[F(s,x)\le(4\ell+2)s^2(1-x^2)+(x^2+s^2(1-x^2))^{\ell}(1-x^2-s^2(1-x^2)).\]
    As $x^2+s^2(1-x^2)\le1$, this gives the following simpler estimation 
    $$F(s,x)\le(4\ell+2)s^2(1-x^2)+(1-x^2)(1-s^2).$$
    Now, integrating above inequality, we get
\begin{align*}
	P_{2\ell}&(x)-P_{2(\ell+1)}(x)\\
	&=\frac{\omega_{d-3}}{\omega_{d-2}}\int_{-1}^1F(s,x)(1-s^2)^{\frac{d-4}{2}}\,ds\\
	&\le\frac{\omega_{d-3}}{\omega_{d-2}}(1-x^2)\int_{-1}^1\left(4\ell+2-(4\ell+1)(1-s^2)\right)(1-s^2)^{\frac{d-4}{2}}\,ds\\
	&=\frac{\omega_{d-3}}{\omega_{d-2}}(1-x^2)\left((4\ell+2)\sqrt{\pi}\frac{\Gamma\left(\frac d2-1\right)}{\Gamma\left(\frac {d+1}{2}-1\right)}-(4\ell+1)\sqrt{\pi}\frac{\Gamma\left(\frac d2\right)}{\Gamma\left(\frac {d+1}{2}\right)}\right)\\
	&=\left(4\ell+2-(4\ell+1)\frac{d-2}{d-1}\right)(1-x^2)\\
	&=\frac{4\ell+d}{d-1}(1-x^2).
\end{align*}
Since $P_2(x)=x^2+\frac{1-x^2}{d-1}$ (computed by the rotation symmetry of $Y_2$) and
$$\frac{\lambda_{2(\ell+1)}-\lambda_{2\ell}}{\lambda_2}(1-P_2(x))=\frac{4\ell+d}{d-1}(1-x^2),$$
this proves the proposition.

\bigskip

\section*{\textbf{Compliance with ethical standards}}

\bigskip

\textbf{Conflict of interest} The authors have no known competing financial interests
or personal relationships that could have appeared to influence this reported work.

\bigskip

\textbf{Availability of data and material} Not applicable.

\bigskip

\end{document}